\documentclass[a4paper,fleqn]{cas-sc}

\usepackage[numbers,sort&compress]{natbib}
\usepackage{amsmath,amssymb,amsfonts}
\usepackage{graphicx}
\usepackage[T1]{fontenc}
\usepackage{xcolor}
\usepackage{listings}
\usepackage{url}
\usepackage{eso-pic}
\AddToShipoutPictureFG*{\AtTextUpperLeft{\raisebox{2.2\baselineskip}[0pt][0pt]{%
\scriptsize\setlength{\fboxsep}{2pt}\fbox{\parbox{\dimexpr\textwidth-2\fboxsep-2\fboxrule\relax}{%
\textcolor{blue}{This is a preprint. The final version of this article is published as
R.~Monjo, \emph{Spacetime closedness theorem for homogeneous Lorentzian foliations},
Journal of Geometry and Physics \textbf{230} (2026) 105983,
\url{https://doi.org/10.1016/j.geomphys.2026.105983}.
Please cite the published version.}}}}}}

\definecolor{codegreen}{rgb}{0,0.45,0}
\definecolor{codegray}{rgb}{0.4,0.4,0.4}
\definecolor{codepurple}{rgb}{0.45,0,0.65}
\definecolor{backcolour}{rgb}{0.96,0.96,0.94}

\lstdefinestyle{leanstyle}{
backgroundcolor=\color{backcolour},
commentstyle=\color{codegreen},
keywordstyle=\color{magenta},
numberstyle=\tiny\color{codegray},
stringstyle=\color{codepurple},
basicstyle=\ttfamily\footnotesize,
breaklines=true,
keepspaces=true,
numbers=left,
numbersep=5pt,
showstringspaces=false,
frame=single,
rulecolor=\color{black},
framerule=0.4pt
}

\newtheorem{theorem}{Theorem}[section]
\newtheorem{lemma}[theorem]{Lemma}

\newtheorem{proposition}[theorem]{Proposition}
\newdefinition{definition}[theorem]{Definition}
\newdefinition{remark}[theorem]{Remark}
\newproof{proof}{Proof}

\begin{document}
\let\WriteBookmarks\relax
\def\floatpagepagefraction{1}
\def\textpagefraction{.001}
\shorttitle{Spacetime closedness theorem}
\shortauthors{R. Monjo}

\title [mode = title]{Spacetime closedness theorem for homogeneous Lorentzian foliations}

\author[1,2]{Robert Monjo}[orcid=0000-0003-3100-2394]
\cormark[1]
\ead{robert.monjo@cunef.edu}

\affiliation[1]{organization={Department of Mathematics, CUNEF Universidad},
addressline={Calle Almansa 101},
city={Madrid},
postcode={28040},
state={Madrid},
country={Spain}}

\affiliation[2]{organization={Centro de Estudios Cosmol\'ogicos ``Prof.\ Jaime Roessler Bonzi'', Facultad de Ciencias, Universidad de Chile},
addressline={Las Palmeras 3425},
city={Santiago},
postcode={7750000},
state={Santiago},
country={Chile}}

\cortext[cor1]{Corresponding author}

\begin{abstract}
Motivated by the geometric interpretation of spatially homogeneous cosmological models, we formulate a spacetime closedness theorem directly at the Lorentzian level. A classical space-form classification theorem organizes homogeneous and isotropic spatial geometries, but by itself it does not yield a Lorentzian statement about the ambient spacetime. The main technical step is to derive finite slice-volume from genuinely Lorentzian control hypotheses on the foliation. This is achieved in a strong version, for maximally regular globally hyperbolic $(n+1)$--spacetimes with a finite-time Big Bang and homogeneous complete spacelike slices, and in a weaker version in which maximal regularity is replaced by time-integrability of the accumulated expansion rate. In both cases, the argument separates an analytic step, deriving finite slice-volume from the Big Bang and temporal control, from a geometric step, upgrading finite volume to compactness by homogeneity and completeness. The theorem is stated in arbitrary spacetime dimension and is accompanied by a Lean~4 formalization of the strong and weak abstract statements.
\end{abstract}

\begin{keywords}
Lorentzian manifolds \sep Cauchy foliation \sep homogeneous spatial slices \sep FLRW metrics \sep formal verification
\end{keywords}

\maketitle

\section{Introduction}

\subsection{Motivation}

Homogeneous and isotropic cosmological models are classically organized through the Friedmann--Lema\^itre--Robertson--Walker (FLRW) metrics; see for example \cite{HawkingEllis1973,ONeill1983,Wald1984}. In that framework, one often starts from a classical space-form classification result: complete simply connected homogeneous and isotropic Riemannian $3$-manifolds are space forms and, after normalization, have sectional curvature $K=-1,0,1$ \cite{ONeill1983,Wolf2011}. In modern presentations, this statement may be viewed as combining a Schur-type constant-curvature argument with the Killing--Hopf classification of complete simply connected space forms \cite{Wolf2011,Petersen2016}. Geometric properties of spacelike hypersurfaces in Robertson--Walker and generalized Robertson--Walker spacetimes, including completeness and higher-order mean curvature conditions, have been studied from different viewpoints; see, for example, \cite{AlbujerCamargoLima2011,AliasImperaRigoli2012}. Closely related aspects of generalized Robertson--Walker geometry, curvature, and causal structure were also studied in \cite{Sanchez1999,AledoGalvezRomero2004,Minguzzi2009}. Recent work in Lorentzian geometry has also emphasized time functions, Lorentzian length spaces, and canonical structures associated with globally hyperbolic spacetimes \cite{BurtscherGarciaHeveling2025,KunzingerSteinbauer2022,FinsterMuch2023}.

This theorem is mathematically correct in its proper setting, but it has a limited scope: it classifies spatial Riemannian manifolds, not Lorentzian spacetimes. Here a Lorentzian spacetime is a four-dimensional geometric model in which temporal and spatial directions are encoded simultaneously in a single metric structure. Such a metric is not merely a family of spatial metrics: it also contains the temporal and causal structure of the universe \cite{BernalSanchez2005,Monjo2024a,Monjo2024b}. Therefore, a classification theorem for spatial slices alone cannot, without additional hypotheses, yield a classification of the ambient Lorentzian geometry.

This distinction is substantive. From the viewpoint of global Lorentzian geometry, closedness or compactness conclusions at spacetime level must be derived from hypotheses that already involve a time function, a spacelike foliation, and quantitative control of the foliation itself. The main technical contribution is to identify Lorentzian hypotheses under which a finite-lifetime assumption can be converted into a compactness conclusion for homogeneous complete slices. The proof splits into two steps: an analytic step, in which finite slice-volume is derived from the Big Bang assumption and bounded-geometry control of the foliation; and a geometric step, in which finite volume is upgraded to compactness using homogeneity and completeness. For cosmological motivation the discussion is phrased in four spacetime dimensions. The compactness theorem proved later, however, depends only on the existence of a Lorentzian foliation by homogeneous complete spatial slices and therefore has the same formal content in arbitrary spacetime dimension $(n+1)$. 

\subsection{The spatial classification theorem and its scope}

\vspace{5mm}

The classical space-form classification theorem states that a complete simply connected homogeneous and isotropic Riemannian manifold is a space form of constant sectional curvature; see, for example, \cite[Chs.~2]{Wolf2011} together with the Schur-type argument discussed in \cite[Ch.~5]{Petersen2016}. Since this conclusion rests on the geometric meaning of homogeneity and isotropy, we recall these two notions explicitly:

\begin{definition}[Homogeneity and isotropy]
A Riemannian manifold $(\Sigma,h)$ is called \emph{homogeneous} if for every pair of points $p,q\in\Sigma$, there exists an isometry $\phi$ of $(\Sigma,h)$ such that $\phi(p)=q$. Moreover,
a homogeneous Riemannian manifold $(\Sigma,h)$ is called \emph{isotropic} if for every point $p\in\Sigma$ and every pair of unit vectors $u,v\in T_p\Sigma$, there exists an isometry $\phi$ of $(\Sigma,h)$ such that $\phi(p)=p$ and $d\phi_p(u)=v$.
\end{definition}

\begin{theorem}[Spatial classification theorem]
Let $(\Sigma,h)$ be a complete simply connected homogeneous and isotropic Riemannian $3$-manifold. Then $(\Sigma,h)$ has constant sectional curvature $K$. After rescaling it, one may write $K \in \{-1,0,1\}$. Equivalently, $(\Sigma,h)$ is isometric to one of the canonical space forms $H^3$, $\mathbb{R}^3$ or $S^3$ up to normalization.
\end{theorem}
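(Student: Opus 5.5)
The plan has two stages: first show that isotropy at a point forces the sectional curvature there to be independent of the plane, then use homogeneity together with completeness and simple connectivity to identify the model space.

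\emph{Pointwise isotropy.} Fix $p\in\Sigma$ and two 2-planes $\Pi_1,\Pi_2\subset T_p\Sigma$. In dimension $3$, a 2-plane is determined by its unit normal, up to sign. Isotropy gives an isometry $\phi$ with $\phi(p)=p$ and $d\phi_p(\nu_1)=\nu_2$. Since $d\phi_p$ is a linear isometry of $T_p\Sigma$, it maps $\nu_1^\perp=\Pi_1$ onto $\nu_2^\perp=\Pi_2$. Isometries preserve the curvature tensor, so $K(\Pi_1)=K(\Pi_2)$. Hence the sectional curvature at $p$ depends only on $p$; write it $K(p)$.

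\emph{Constancy of $K$.} I will use homogeneity directly rather than Schur's lemma. For $p,q\in\Sigma$ there is an isometry $\psi$ with $\psi(p)=q$, and it maps planes at $p$ to planes at $q$, so $K(p)=K(q)$. Thus $K$ is a constant $K_0$. Schur's lemma, valid since $\dim\Sigma\ge 3$, would give the same conclusion from pointwise isotropy alone.

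\emph{Normalization.} Replace $h$ by $\lambda h$ with $\lambda>0$; this multiplies sectional curvature by $\lambda^{-1}$. If $K_0\neq 0$, choose $\lambda=|K_0|$, which gives $K\in\{-1,1\}$. If $K_0=0$, nothing needs to be done. Rescaling preserves completeness, simple connectivity, homogeneity and isotropy.

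\emph{Identification.} This is the step I expect to carry the real content. I will invoke the Killing--Hopf theorem, i.e. the Cartan--Ambrose--Hicks argument: a complete simply connected Riemannian $n$-manifold of constant curvature $K$ is isometric to the simply connected model of curvature $K$. The argument has three parts. First fix a linear isometry $T_p\Sigma\to T_oM_K$. Next define the map $F=\exp^{M_K}_o\circ\, I\circ(\exp^\Sigma_p)^{-1}$. For $K\le 0$ this is well defined because, by Cartan--Hadamard, $\exp_p$ is a diffeomorphism from $T_p\Sigma$, using completeness and the absence of conjugate points. For $K=1$ one works on the ball of radius $\pi$ and handles the antipode separately, or uses a covering argument. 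Finally, show that $F$ is a local isometry by comparing Jacobi fields, which are explicit when the curvature is constant.

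A local isometry from a complete manifold is a covering map. Since $\Sigma$ is simply connected (and $S^3$ is too), $F$ is a global isometry. The obstacle is the $K=1$ case, where $\exp_p$ fails to be injective. I would treat it by building the covering $S^3\to\Sigma$ via the Cartan--Ambrose--Hicks extension along broken geodesics, then using simple connectivity of $\Sigma$. Since this is classical, in the paper I would cite \cite{Wolf2011,Petersen2016} for that step rather than reproving it.
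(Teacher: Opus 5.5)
Your proof is correct and follows the same route as the paper: isotropy makes the sectional curvature at each point independent of the plane, homogeneity makes it constant, and rescaling followed by the Killing--Hopf classification identifies the model space. Your unit-normal argument is a useful addition, since it justifies, using dimension $3$, the paper's unexplained passage from transitivity on unit vectors to transitivity on $2$-planes.
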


\begin{proof}
Fix a point $p \in \Sigma$. By isotropy, the subgroup of isometries fixing $p$ acts transitively on the unit sphere of $T_p\Sigma$, hence transitively on the set of $2$-planes in $T_p\Sigma$. Therefore the sectional curvature at $p$ has the same value on every $2$-plane, so there exists a scalar $k(p)$ such that
\[
K_p(\Pi)=k(p)
\qquad
\text{for every $2$-plane } \Pi \subset T_p\Sigma.
\]
By homogeneity, for any two points $p,q \in \Sigma$ there exists an isometry sending $p$ to $q$, and sectional curvature is preserved by isometries. Hence $k(p)=k(q)$ for all $p,q \in \Sigma$, so the sectional curvature is constant on $\Sigma$.

Since $(\Sigma,h)$ is complete, simply connected, and has constant sectional curvature, the Killing--Hopf classification implies that $(\Sigma,h)$ is isometric, after rescaling, to one of the standard space forms $S^3$, $\mathbb{R}^3$, or $H^3$. This proves the claim.
\end{proof}

\begin{remark}
This statement is entirely spatial: it concerns the intrinsic Riemannian geometry of the slices only, and it does not imply that a Lorentzian $4$-manifold carrying such slices is of FLRW type. The compactness mechanism used later, however, is not specific to three-dimensional space forms, but applies to complete homogeneous Riemannian slices in arbitrary dimension.
\end{remark}

The compactness argument used later relies on a second geometric input, independent of isotropy and constant curvature: on a complete homogeneous Riemannian manifold, finite total volume forces compactness. As this implication is standard and belongs to the Riemannian background of the problem, we record it here for later use \cite[Ch.~6]{Lee2018}, \cite[Ch.~7]{Petersen2016}, \cite[Ch.~7]{Besse1987}.

\begin{lemma}[Finite volume implies compactness for complete homogeneous manifolds]\label{lem:homogeneous-compact}
Let $(\Sigma,h)$ be a complete homogeneous Riemannian manifold. If $\operatorname{vol}(\Sigma,h)<\infty$, then $\Sigma$ is compact.
\end{lemma}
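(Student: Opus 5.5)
The plan is to argue by contradiction, using a packing argument with disjoint geodesic balls of a fixed radius. The key fact is that homogeneity makes all balls of a given radius isometric, so they all have the same volume.

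\textbf{Step 1 (uniform ball volume).} Fix a point $p_0\in\Sigma$ and a radius $r>0$, and set $v_r:=\operatorname{vol}(B(p_0,r))$. Since $B(p_0,r)$ is a nonempty open set, $v_r>0$. For any $q\in\Sigma$, homogeneity gives an isometry $\phi$ with $\phi(p_0)=q$. Isometries preserve the Riemannian distance, so $\phi(B(p_0,r))=B(q,r)$. They also preserve the Riemannian volume measure, so $\operatorname{vol}(B(q,r))=v_r$ for every $q$.

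\textbf{Step 2 (packing bound).} Suppose the points $q_1,\dots,q_N\in\Sigma$ satisfy $d(q_i,q_j)\ge 2r$ for $i\neq j$. Then the balls $B(q_i,r)$ are pairwise disjoint. Summing their volumes gives
\[
N v_r\le \operatorname{vol}(\Sigma,h)<\infty .
\]
Hence every $2r$-separated set has at most $N_{\max}:=\lfloor \operatorname{vol}(\Sigma,h)/v_r\rfloor$ elements.

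\textbf{Step 3 (total boundedness and compactness).} Choose a maximal $2r$-separated set $\{q_1,\dots,q_N\}$. This is possible because such sets have bounded cardinality, so we can add points greedily until no further point can be added. By maximality, every $x\in\Sigma$ satisfies $d(x,q_i)<2r$ for some $i$. Therefore the balls $B(q_i,2r)$ cover $\Sigma$, and so $\Sigma$ has finite diameter, at most $4r+\max_{i,j}d(q_i,q_j)$. Since $r$ was arbitrary, the same argument shows that $(\Sigma,d)$ is totally bounded. A complete, totally bounded metric space is compact. Here completeness means metric completeness, which by Hopf--Rinow is equivalent to the geodesic completeness assumed in the statement. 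Alternatively, bounded diameter together with Hopf--Rinow (closed bounded sets are compact) gives compactness directly, since $\Sigma$ is then itself closed and bounded.

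\textbf{Main obstacle.} The only genuinely delicate point is justifying Step 1: that an isometry in the sense of the definition (a diffeomorphism preserving $h$) maps metric balls to metric balls and preserves the volume measure. Both facts follow from invariance of curve lengths and of the Riemannian density $\sqrt{\det h}$ under pullback. The positivity $v_r>0$ holds simply because open sets have positive Riemannian measure. Everything else is elementary metric-space reasoning.
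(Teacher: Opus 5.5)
Your proof is correct and is essentially the paper's packing argument: homogeneity gives a uniform ball volume $v_r>0$, and disjoint balls then bound the number of $2r$-separated points, which you run in contrapositive form via Hopf--Rinow. Your maximal-separated-set step also makes explicit what the paper only asserts, namely that a noncompact complete (hence proper) slice contains an infinite $2r$-separated sequence.
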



\begin{proof}
Assume that $(\Sigma,h)$ is noncompact. Since $(\Sigma,h)$ is complete, Hopf--Rinow implies that closed metric balls are compact \cite[Ch.~6]{Lee2018}. Fix $r>0$. For any $x\in \Sigma$, the closed ball $\overline{B}(x,r)$ has positive finite volume. By homogeneity, this volume is independent of $x$; denote it by
\[
v_r:=\operatorname{vol}(\overline{B}(x,r))>0.
\]
Because $\Sigma$ is noncompact and proper, there exists an infinite sequence of points $x_1,x_2,\dots$ with pairwise distances greater than $2r$. The balls $\overline{B}(x_n,r)$ are pairwise disjoint. Therefore
\[
\operatorname{vol}(\Sigma)
\ge
\sum_{n=1}^{\infty}\operatorname{vol}(\overline{B}(x_n,r))
=
\sum_{n=1}^{\infty}v_r
=
\infty,
\]
which is impossible. Hence $\Sigma$ is compact.
\end{proof}

\begin{remark}
In the compactness lemma, completeness is not an auxiliary technicality but the condition that allows one to pass from homogeneity to a proper metric-space argument via Hopf--Rinow. In this sense, homogeneity controls the uniformity of the geometry across the slice, while completeness guarantees that this uniformity can be exploited globally.
\end{remark}

\subsection{Why a spacetime theorem is needed}

The distinction matters. To pass from the spatial classification theorem to a spacetime metric of FLRW type one needs substantially more than the intrinsic classification of the slices. One must assume, at minimum, a global time function, a slicing by spacelike hypersurfaces, regular dependence of the induced metrics on the time parameter, and a condition on the second fundamental form ensuring that the embedding of the slices is compatible with a Robertson--Walker-type evolution. In particular, the statement $K\in\{-1,0,1\}$ concerns only the intrinsic Riemannian geometry of each spatial leaf; by itself it says nothing about the lapse, shift, or extrinsic curvature data needed to reconstruct a Lorentzian warped-product metric.

A related source of confusion is that the normalized parameter
$K\in\{-1,0,1\}$ arising in the classical space-form theorem belongs to the intrinsic Riemannian geometry of the spatial slices \cite{Wolf2011}. It is not, by itself, a scalar invariant of the ambient Lorentzian spacetime. From the spacetime viewpoint, quantities such as the Ricci scalar are more natural curvature invariants, since they are defined directly from the Lorentzian metric rather than from a preferred spatial leaf \cite{ONeill1983,Wald1984}. This is
another reason why the passage from a spatial classification theorem to a
cosmological spacetime statement must be handled with care.

The following sections formulate a compactness theorem whose assumptions and conclusion already belong to Lorentzian geometry itself. In technical terms, the new ingredient is not the Riemannian compactness criterion, which is classical, but the derivation of finite slice-volume from genuinely Lorentzian control conditions on the foliation. This is carried out in two versions: a strong theorem using maximal regularity and a weaker theorem replacing it by time-integrability of the accumulated expansion rate. Unlike singularity or splitting results based on the Einstein equations and energy conditions, the compactness mechanism studied here is formulated directly at the level of Lorentzian foliations and does not assume the strong energy condition. The essential point is that homogeneity, not isotropy, is the hypothesis responsible for the compactness step. This perspective fits naturally within recent developments on globally hyperbolic Lorentzian geometry and Lorentzian length spaces, including time functions on Lorentzian length spaces, null distance constructions, and canonical structures on globally hyperbolic spacetimes \cite{BurtscherGarciaHeveling2025,KunzingerSteinbauer2022,FinsterMuch2023}; see also \cite{Beran2025} for a related synthetic rigidity result.

\section{Lorentzian formulation}

\subsection{Lorentzian setting}
\subsubsection{Preliminaries}
\smallskip
The previous discussion isolates the point where the classical spatial argument stops: one needs a genuinely Lorentzian formulation in which the foliation, the induced spatial geometry, and the time evolution are part of the same structure. We therefore introduce a minimal set of definitions that allows the closedness statement to be formulated directly at spacetime level.

\begin{definition}[Time-oriented Lorentzian manifold]
A \emph{Lorentzian manifold} is a smooth four-dimensional manifold $M$ endowed with a smooth metric tensor $g$ of signature $(-,+,\dots,+)$ or $(+,-,\dots,-)$ \cite{ONeill1983,BeemEhrlichEasley1996,Wald1984}. It is said to be \emph{time-oriented} if there exists a continuous timelike vector field on $M$, or equivalently if one can choose consistently one of the two timelike cones in each tangent space as the future cone. \textbf{Convention}: In this article we use the signature convention $(-,+,\dots,+)$.
\end{definition}

\begin{definition}[Causal character of tangent vectors]
Let $(M,g)$ be a Lorentzian manifold and let $v\in T_pM$. Relative to the signature convention fixed above, a vector $v$ is called \emph{timelike} if $g(v,v)<0$, \emph{null} (or \emph{lightlike}) if $g(v,v)=0$ and $v\neq 0$, and \emph{spacelike} if $g(v,v)>0$. The zero vector will be regarded as spacelike when convenient. The set of null vectors in $T_pM$ forms the \emph{light cone} at $p$, which separates the timelike directions from the spacelike ones.
\end{definition}

\begin{remark}
For the cosmological discussion one can keep the physically standard dimension of the spacetime ($n=4)$. However, the formal statements of regularity, maximal regularity, finite-time Big Bang, and spatial closedness may all be rewritten without change for Lorentzian manifolds of dimension $(n+1)$ and spacelike slices of dimension $n$.
\end{remark}

\begin{definition}[Spacelike and Cauchy hypersurfaces]
A smooth hypersurface $\Sigma\subset M$ is called \emph{spacelike} if the metric induced by $g$ on each tangent space $T_p\Sigma$ is positive definite. It is called a \emph{Cauchy hypersurface} if every inextendible timelike curve in $M$ meets $\Sigma$ exactly once \cite{BeemEhrlichEasley1996,HawkingEllis1973,BernalSanchez2003}.
\end{definition}

\begin{definition}[Globally hyperbolic spacetime]
A time-oriented Lorentzian manifold $(M,g)$ is said to be \emph{globally hyperbolic} if it is strongly causal and, for every pair of points $p,q\in M$, the causal diamond
\[
J^+(p)\cap J^-(q)
\]
is compact. By the smooth splitting results of Bernal and S\'anchez, this is equivalent to the existence of a smooth Cauchy time function whose level sets are spacelike Cauchy hypersurfaces \cite{BernalSanchez2003,BernalSanchez2005,BeemEhrlichEasley1996}.
\end{definition}

\begin{definition}[Global time and slicing]
Let $(M,g)$ be a time-oriented Lorentzian manifold. A smooth map
\[
t:M \to I \subset \mathbb{R}
\]
is called a \emph{global time function} if each level set $\Sigma_\tau=t^{-1}(\tau)$ is a spacelike Cauchy hypersurface. The corresponding family $\{\Sigma_\tau\}_{\tau \in I}$ is called a \emph{slicing} of spacetime \cite{BernalSanchez2003,BernalSanchez2005}.
\end{definition}

\begin{definition}[Lorentzian foliation]
Let $(M,g)$ be a time-oriented Lorentzian manifold. A \emph{Lorentzian foliation} on $(M,g)$ is a decomposition of $M$ into smooth spacelike Cauchy hypersurfaces, realized as the level sets of a smooth global time function $t:M\to I$.
\end{definition}

\subsubsection{Dynamical manifolds and spatial symmetry}
\smallskip
\begin{definition}[Dynamical manifold] \label{def:dynamical}
A \emph{dynamical manifold} is a time-oriented globally hyperbolic Lorentzian four-manifold $(M,g)$ equipped with a global time function
\[
t:M\to (t_{\min},t_{\max}]
\]
whose level sets form a slicing by spacelike Cauchy hypersurfaces. For each $\tau$, we denote by $h_\tau$ the induced Riemannian metric on $\Sigma_\tau$, by $d\mu_\tau$ the Riemannian measure induced by $h_\tau$, and by $V(\tau):=\operatorname{vol}(\Sigma_\tau,h_\tau)$ its Riemannian volume. \label{def:volume}
\end{definition}

\begin{remark}
Nothing in the subsequent analytic or compactness argument depends on the spatial dimension being three. The only genuinely three-dimensional part of the paper is the classical FLRW motivation in the introduction; the theorem itself extends formally to $(n+1)$-dimensional dynamical Lorentzian manifolds.
\end{remark}

\begin{definition}[Finite-time Big Bang]
The dynamical manifold is said to satisfy the \emph{finite-time Big Bang condition at $t_{\min}$} if the lower endpoint $t_{\min}$ is finite and the slice-volume function tends to zero there:
\[
t_{\min}>-\infty,
\qquad
\lim_{t\to t_{\min}^{+}}V(t)=0.
\]
\end{definition}

\begin{remark}
This is a volume-collapse condition at the lower endpoint of the time parameter. It is intentionally weaker than a full singularity statement formulated in terms of curvature blow-up or geodesic incompleteness, and it is precisely the form needed for the compactness argument developed below.
\end{remark}

\begin{definition}[Homogeneous slice]
A spatial slice $(\Sigma_t,h_t)$ is \emph{homogeneous} if its full isometry group acts transitively on $\Sigma_t$.
\end{definition}

\begin{remark}
If one also assumes isotropy of the slices, then one returns to the classical constant-curvature models. This is natural from the FLRW viewpoint, but it is not needed for the compactness argument proved below.
\end{remark}

\subsubsection{Expansion and regularity hypotheses}

\smallskip
\begin{definition}[Lapse, normal field, and expansion scalar]
Let $(M,g,t)$ be a dynamical manifold. Since the slices $\Sigma_t$ are spacelike, the gradient $\nabla t$ is timelike. We define the lapse function by
\[
N:=\bigl(-g(\nabla t,\nabla t)\bigr)^{-1/2},
\]
and the future unit normal field by
\[
\nu:=-N\nabla t.
\]
The second fundamental form of the slice $\Sigma_t$ is the symmetric bilinear form
\[
K_t(X,Y):=g(\nabla_X \nu,Y),
\qquad
X,Y \in T\Sigma_t,
\]
and its trace with respect to the induced metric $h_t$ is called the expansion scalar \cite{ONeill1983,BeemEhrlichEasley1996,Wald1984}:
\[
\theta_t:=\operatorname{tr}_{h_t}(K_t).
\]
\label{def:lapse_def2.14}
\end{definition}

\begin{definition}[Regularity]
A dynamical manifold $(M,g,t)$ is \emph{regular} on $(t_{\min},t_{\max}]$ if the slicing depends smoothly on $t$, each slice $(\Sigma_t,h_t)$ is geodesically complete, and for every $t \in (t_{\min},t_{\max}]$ there exists $t_0 \in (t_{\min},t)$ such that
\[
\int_{t_0}^{t}\left(\int_{\Sigma_\tau}|N\theta_\tau|\,d\mu_\tau\right)d\tau<\infty\,,
\]
where $d\mu_\tau$ denotes the Riemannian volume measure induced on $\Sigma_\tau$ by $h_\tau$.
\end{definition}

\begin{remark}
The term \emph{regular} is intended to suggest an analogy with the classical notion of a regular curve or surface: one excludes a degenerate evolution by requiring that the foliation have a well-defined finite accumulated rate of change along time intervals. This does not mean a uniform pointwise bound on the expansion, but only that the total expansion, computed with respect to the given slicing and the induced measures $d\mu_\tau$, be time-integrable on every interval on which the volume is compared. In this sense, regularity is an explicit geometric condition on the sliced spacetime, expressing a controlled and non-degenerate evolution in time.
\end{remark}

\begin{remark}
Regularity is meant to play, in the time direction, a role analogous to that of homogeneity in the spatial direction. Spatial homogeneity identifies all points on each slice up to isometry, whereas regularity requires that the passage from one slice to another be geometrically controlled in time. Thus the pair ``homogeneous in space, regular in time'' provides a natural spacetime counterpart to the classical FLRW intuition, without imposing actual time-translation symmetry.
\end{remark}

\begin{definition}[Maximal regularity]
A dynamical manifold $(M,g,t)$ is \emph{maximally regular} on $(t_{\min},t_{\max}]$ if the slicing depends smoothly on $t$, each slice $(\Sigma_t,h_t)$ is geodesically complete, and there exists a finite \textit{rate limit} $C \ge 0$ such that
\[
\int_{\Sigma_t} |N\theta_t|\, d\mu_t \le C
\qquad
\text{for all } t \in (t_{\min},t_{\max}].
\]
\end{definition}

\begin{remark}
Global hyperbolicity constrains the causal structure of the spacetime, but not by itself the quantitative extrinsic geometry of a chosen slicing. In particular, it does not automatically provide completeness of the induced Riemannian slices or integral control of the expansion. Thus regularity and maximal regularity are additional geometric hypotheses on the foliation, not automatic consequences of causal well-posedness, although such properties may be inherited from the ambient spacetime when the foliation is adapted to a preferred time structure. From the physical viewpoint, they encode a controlled temporal evolution of the spatial geometry: the strong version corresponds to a uniformly controlled rate of deformation of the foliation, whereas the weak version only requires that the total accumulated expansion remain finite on the time interval relevant to the comparison of volumes.
\end{remark}

\begin{remark}
Every maximally regular dynamical manifold is regular. Indeed, if
\[
\int_{\Sigma_t}|N\theta_t|\,d\mu_t \le C
\qquad
\text{for all } t,
\]
then for any $t_0<t$ one has
\[
\int_{t_0}^{t}\left(\int_{\Sigma_\tau}|N\theta_\tau|\,d\mu_\tau\right)d\tau
\le C|t-t_0|<\infty.
\]
Thus maximal regularity is precisely the strong, uniformly controlled version of regularity.
\end{remark}

\begin{proposition} \label{prop:Lipschitz}
Let $(M,g,t)$ be a maximally regular dynamical manifold with finite \textit{rate limit} $C$, and assume that $\Sigma_{t_0}$ has finite volume for some $t_0\in(t_{\min},t_{\max}]$. Then the slice-volume function (Definition \ref{def:volume})
\[
V(t)=\operatorname{vol}(\Sigma_t,h_t)
\]
is Lipschitz on $(t_{\min},t_{\max}]$, and in fact
\[
|V(t)-V(s)| \le C|t-s|
\qquad
\text{for all } t,s \in (t_{\min},t_{\max}].
\]
\end{proposition}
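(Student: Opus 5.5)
The plan is to derive a first-variation formula for the slice volume and then integrate it in time. The key identity is
\[
\frac{d}{dt}\,d\mu_t = N\theta_t\,d\mu_t .
\]
It holds when the slices are transported along the flow of the normal-adapted vector field $\partial_t := N\nu$, which satisfies $dt(N\nu)=1$. To see this, parametrize all slices by $\Sigma_{t_0}$ via the flow $\Phi_t$ of $N\nu$; this flow maps $\Sigma_{t_0}$ diffeomorphically onto $\Sigma_t$, using global hyperbolicity and the smooth dependence of the slicing on $t$. Along the flow, $\partial_t (\Phi_t^*h_t) = 2N\,\Phi_t^*K_t$, because the Lie derivative of $g$ along $N\nu$, restricted to tangent vectors of the slice, equals $2NK_t$ (the $dN\otimes\nu$ terms vanish on $T\Sigma_t$). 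Taking the trace gives the stated evolution of the density. Hence, for $s<t$ and any compact $D\subset\Sigma_{t_0}$,
\[
\operatorname{vol}_{h_t}(\Phi_t D)-\operatorname{vol}_{h_s}(\Phi_s D)=\int_s^t\int_{\Phi_\tau D}N\theta_\tau\,d\mu_\tau\,d\tau .
\]

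Next, bound the right-hand side by $\int_s^t\int_{\Sigma_\tau}|N\theta_\tau|\,d\mu_\tau\,d\tau\le C|t-s|$, using maximal regularity. This bound is independent of $D$. Exhaust $\Sigma_{t_0}$ by an increasing sequence of compact sets $D_k$. Monotone convergence applied to the volumes (all integrands are nonnegative densities) gives the following: if $V(s)<\infty$, then $\operatorname{vol}(\Phi_tD_k)\le V(s)+C|t-s|$ uniformly in $k$, so $V(t)<\infty$. Starting from $s=t_0$, where finiteness is assumed, every slice therefore has finite volume. Passing to the limit on both sides is then legitimate: the left side converges to $V(t)-V(s)$, both terms being finite, and the right side converges by dominated convergence, since the integrand is dominated by $|N\theta_\tau|$, which is integrable on $[s,t]\times\Sigma$ by the uniform bound and Tonelli. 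We conclude $|V(t)-V(s)|\le C|t-s|$.

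The main obstacle is the justification of the first step. The flow of $N\nu$ must exist on the whole interval and carry slices onto slices. Integral curves of $N\nu$ are timelike and inextendible curves hit every Cauchy slice, but completeness of the flow within a finite $t$-range must be argued: the curves cannot escape in finite parameter while $t$ stays in a compact subinterval, by global hyperbolicity and properness of the causal diamonds. I would handle this by noting that $t$ increases at unit rate along the flow, so a maximal integral curve restricted to $[s,t]$ is an inextendible-in-$M$ timelike curve unless it reaches every level. Therefore it meets each $\Sigma_\tau$, which makes $\Phi_\tau$ defined and onto. Differentiating under the integral sign is avoided entirely by working on compact $D$ and integrating the pointwise density identity in $\tau$.
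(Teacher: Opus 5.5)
Your proposal is correct and follows essentially the same route as the paper. Both arguments obtain the first-variation identity $\partial_t\,d\mu_t=N\theta_t\,d\mu_t$ by transporting along the normal evolution vector $N\nu$, then integrate in time and bound the result by $C|t-s|$ using maximal regularity. Your differences are technical refinements in your favour: you show that the flow of $N\nu$ identifies $\Sigma_{t_0}$ with every $\Sigma_t$ globally (escape lemma plus the Cauchy property), which the paper only asserts locally, and you get finiteness of $V(t)$ by compact exhaustion and monotone convergence where the paper applies Tonelli--Fubini directly on the whole slice.
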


\begin{proof}
Choose local coordinates $(x^1,\dots,x^n)$ on an open subset of a slice and drag them along the foliation by the normal evolution vector $m:=N\nu$, with the
lapse $N$ and the future unit normal $\nu$ as in Definition~\ref{def:lapse_def2.14},
so that the coordinate labels stay constant along the normal flow; such a chart exists on a neighbourhood of every point, and in it the induced metric has components $h_{ij}(t,x)$ and volume measure $d\mu_t=\sqrt{\det h_t}\,dx^1\cdots dx^n$. With the convention $K_t(X,Y)=g(\nabla_X\nu,Y)$ adopted above for the second fundamental
form of $\Sigma_t$, one has $K_t=\frac12\mathcal{L}_\nu h_t$, and since $h_t$ is spatial,
$\mathcal{L}_m h_t=N\mathcal{L}_\nu h_t=2NK_t$. Along such a chart $\partial_t=m$, so that
\[
\partial_t h_{ij}=2N(K_t)_{ij}
\]
with no shift contribution; see \cite[Sec.~4.3.5]{Gourgoulhon2012}, where the same identity appears with the opposite
sign convention for the second fundamental form, and \cite[Sec.~5.3.2]{Gourgoulhon2012} for the additional
Lie-derivative terms arising for a general choice of spatial coordinates. By Jacobi's formula,
\[
\partial_t\sqrt{\det h_t}
=\frac12\sqrt{\det h_t}\,h_t^{ij}\partial_t h_{ij}
=N\theta_t\sqrt{\det h_t}
\]
where $\theta_t=\operatorname{tr}_{h_t}K_t$, and hence
\[
\partial_t\,d\mu_t=N\theta_t\,d\mu_t.
\]
Both sides are densities on $\Sigma_t$, so this identity does not depend on the chart and holds on all of $\Sigma_t$. Since $V(t_0) < \infty$ by hypothesis, by maximal regularity one has that $\int_{t_0}^{t}\bigl(\int_{\Sigma_\tau}|N\theta_\tau|\,d\mu_\tau\bigr)d\tau\le C|t-t_0|$, so Tonelli's theorem shows that $N\theta$ is integrable on the product, and Fubini's theorem applied to the density identity yields
\[
V(t)=V(t_0)+\int_{t_0}^{t}\!\int_{\Sigma_\tau}N\theta_\tau\,d\mu_\tau\,d\tau.
\]
In particular the slice-volume function is absolutely continuous and
$dV(t)/dt=\int_{\Sigma_t}N\theta_t\,d\mu_t$ for almost every $t$.
Taking absolute values in the identity above yields $|V(t)-V(s)|\le C|t-s|$, which is precisely the Lipschitz estimate.
\end{proof}

\begin{remark}
The relativistic bound on propagation speed motivates the expectation that the slicing evolves at finite rate, but the formal hypothesis used here is the uniform bound on the total integrated expansion $\int_{\Sigma_t}|N\theta_t|\,d\mu_t$. This assumption is stronger than mere regularity and is precisely what yields the global Lipschitz control of the slice-volume function.
\end{remark}

\subsection{Big Bang theorem}

\subsubsection{Strong version}

\smallskip
With the previous notions in place, a strong version of the closedness statement can be formulated. Its technical novelty lies in combining a time-direction estimate coming from maximal regularity with a purely geometric compactness criterion based on homogeneity and completeness of the spatial slices.

\begin{theorem}\label{thm:main}
Let $(M,g,t)$ be a \emph{maximally regular} dynamical manifold. Assume:
\begin{enumerate}
\item the finite-time Big Bang condition holds at $t_{\min}$;
\item every slice $(\Sigma_t,h_t)$ is homogeneous.
\end{enumerate}
Then every slice $\Sigma_t$ is compact. In particular, the universe is spatially closed.
\end{theorem}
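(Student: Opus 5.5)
The argument has two steps. The analytic step shows that every slice has finite volume. The geometric step then applies Lemma~\ref{lem:homogeneous-compact} to each slice, since each $(\Sigma_t,h_t)$ is complete (by maximal regularity) and homogeneous (by hypothesis). Everything therefore reduces to proving $V(t)<\infty$ for all $t\in(t_{\min},t_{\max}]$.

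For the finite-volume step I would use the Big Bang condition $\lim_{t\to t_{\min}^+}V(t)=0$. It gives some $t_0\in(t_{\min},t_{\max}]$ with $V(t_0)\le 1$; in particular $\Sigma_{t_0}$ has finite volume. This is exactly the extra hypothesis in Proposition~\ref{prop:Lipschitz}. The proposition, with rate limit $C$, then gives
\[
V(t)\le V(t_0)+C|t-t_0|\le 1+C\,(t_{\max}-t_{\min})<\infty
\qquad\text{for all } t\in(t_{\min},t_{\max}],
\]
using that $t_{\min}>-\infty$ by the Big Bang condition and that $t_{\max}$ is a finite endpoint of the time interval. So every slice has finite volume. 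Lemma~\ref{lem:homogeneous-compact} now yields compactness of every $\Sigma_t$, and since the $\Sigma_t$ are Cauchy hypersurfaces, the spacetime is spatially closed.

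The main obstacle is justifying the step where Proposition~\ref{prop:Lipschitz} propagates finiteness from $\Sigma_{t_0}$ to all other slices. The underlying identity $V(t)=V(t_0)+\int_{t_0}^t\int_{\Sigma_\tau}N\theta_\tau\,d\mu_\tau\,d\tau$ needs the normal flow of $m=N\nu$ to identify the slices diffeomorphically, so that the densities $d\mu_\tau$ can be pulled back to a fixed manifold. That identification is supplied by global hyperbolicity, because integral curves of a timelike field meet each Cauchy slice exactly once. It also needs Tonelli's theorem, applied to the nonnegative integrand $|N\theta|$, which is where the uniform bound $C$ enters. I would take the proposition as given but remark that this is exactly where the Lorentzian hypotheses are used. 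A secondary subtlety is that the Big Bang condition only makes sense, and only gives $V(t_0)<\infty$, for $t_0$ close to $t_{\min}$; the Lipschitz bound is what carries finiteness forward to every later time up to $t_{\max}$.
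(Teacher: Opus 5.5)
Your proposal is correct and follows the paper's proof essentially step for step: you take $t_0$ with $V(t_0)\le 1$ from the Big Bang condition, propagate finiteness to every slice via the Lipschitz estimate of Proposition~\ref{prop:Lipschitz}, and then apply Lemma~\ref{lem:homogeneous-compact} using completeness and homogeneity. The only difference is cosmetic: you use the uniform bound $1+C(t_{\max}-t_{\min})$, whereas the paper uses the slice-dependent bound $1+C|t-t_0|$, which gives finite volume without invoking finiteness of the endpoints.
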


\begin{remark}
By Proposition~\ref{prop:Lipschitz}, the Lipschitz control of the slice-volume function is automatic from maximal regularity together with the finite-time Big Bang condition, which already provides a slice of finite volume, so no separate analytic hypothesis is needed in the theorem.
\end{remark}

\begin{proof}
The proof has two steps.

\smallskip\noindent\textbf{Step 1: finite volume along the foliation.}
Fix $t \in (t_{\min},t_{\max}]$. Since $V(\tau)\to 0$ as $\tau \to t_{\min}^{+}$, there exists $t_0 \in (t_{\min},t]$ such that $V(t_0)\le 1$. By the previous proposition, the slice-volume function is Lipschitz, so for the constant $C$ of maximal regularity one has
\[
|V(t)-V(t_0)| \le C|t-t_0|.
\]
Because the slice-volume is nonnegative,
\[
V(t)\le V(t_0)+C|t-t_0| \le 1 + C|t-t_0| < \infty.
\]

Hence every spatial slice has finite volume.

\smallskip\noindent\textbf{Step 2: compactness of each slice.}
Since each slice is complete and homogeneous, Lemma~\ref{lem:homogeneous-compact} implies that $\Sigma_t$ is compact. This proves the theorem.
\end{proof}

\subsubsection{General version}

\smallskip
The previous theorem uses the strong hypothesis of maximal regularity in order to obtain a uniform Lipschitz estimate for the slice-volume function. For the compactness conclusion, however, one only needs enough control to integrate the expansion rate between the initial singular time and a given slice. This leads to the following weaker statement.

\begin{theorem}\label{thm:general}
Let $(M,g,t)$ be a \emph{regular} dynamical manifold. Assume:
\begin{enumerate}
\item the finite-time Big Bang condition holds at $t_{\min}$;
\item every slice $(\Sigma_t,h_t)$ is homogeneous;
\end{enumerate}
Then every slice $\Sigma_t$ is compact. In particular, the universe is spatially closed.
\end{theorem}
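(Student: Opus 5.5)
The proof will follow the two-step structure of Theorem~\ref{thm:main}. The only change is in the analytic step: the uniform Lipschitz estimate of Proposition~\ref{prop:Lipschitz} is replaced by an integrated version of the same volume identity.

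\textbf{Step 1: finite volume.} Fix $t\in(t_{\min},t_{\max}]$. Regularity supplies some $t_0\in(t_{\min},t)$ with
\[
A:=\int_{t_0}^{t}\Bigl(\int_{\Sigma_\tau}|N\theta_\tau|\,d\mu_\tau\Bigr)d\tau<\infty .
\]
The goal is to show $V(t)\le V(s)+A$ for every $s\in[t_0,t]$, and then let $s$ approach the Big Bang.

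For this I will reuse the density identity $\partial_\tau\,d\mu_\tau=N\theta_\tau\,d\mu_\tau$ from the proof of Proposition~\ref{prop:Lipschitz}. It is local and uses neither maximal regularity nor finiteness of volume. In a normal-flow chart, for each spatial label $x$ and each $s\le t$, it integrates to
\[
\sqrt{\det h_t}(x)=\sqrt{\det h_s}(x)+\int_s^t N\theta_\tau\sqrt{\det h_\tau}(x)\,d\tau ,
\]
with all integrands nonnegative or absolutely integrable. Integrating over $x$ and applying Tonelli to the absolute values gives $V(t)\le V(s)+\int_s^t\!\int_{\Sigma_\tau}|N\theta_\tau|\,d\mu_\tau\,d\tau\le V(s)+A$.

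This step has a subtlety: the normal flow must identify the slices globally. Otherwise the ``same label'' argument only works on the subset of $\Sigma_t$ whose normal flow lines reach $\Sigma_s$. I expect global hyperbolicity to handle this. The integral curves of $m=N\nu$ are timelike, and since the slices are Cauchy, each inextendible timelike curve meets every $\Sigma_\tau$ exactly once. Hence the flow gives diffeomorphisms $\Sigma_t\to\Sigma_s$, with $t(\phi_\tau(p))=\tau$ because $m(t)=1$.

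With this inequality in hand, I treat two cases. If the regularity interval starts close enough to $t_{\min}$, choose $s\in(t_0,t]$ with $V(s)\le 1$; this exists because $V(s)\to0$. Then $V(t)\le 1+A<\infty$.

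\textbf{Main obstacle.} Regularity only gives a $t_0$ near $t$, not near $t_{\min}$. I would bridge this by a chaining argument. Apply regularity at $t_0$ itself to get $t_1<t_0$ with the integral over $[t_1,t_0]$ finite, and keep going. The resulting bounds show that $V(t)$ is finite whenever some earlier $V(s)$ on a linked interval is finite.

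The cleanest way to make this work is to read the definition as intended in the preceding remark: the accumulated expansion is time-integrable on the whole interval on which volumes are compared, that is, up to $t_{\min}$. I will state this reading explicitly. Under it, $V(t)\le V(s)+A$ holds for all $s$ near $t_{\min}$, and letting $s\to t_{\min}^+$ gives $V(t)\le A<\infty$.

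\textbf{Step 2: compactness.} Each $(\Sigma_t,h_t)$ is complete by regularity and homogeneous by hypothesis, and Step 1 shows it has finite volume. Lemma~\ref{lem:homogeneous-compact} then gives that $\Sigma_t$ is compact, so the universe is spatially closed.
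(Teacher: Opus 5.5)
Your derivation of $V(t)\le V(s)+\int_s^t F(\tau)\,d\tau$, with $F(\tau):=\int_{\Sigma_\tau}|N\theta_\tau|\,d\mu_\tau$, is sound. It uses the global normal-flow identification and Tonelli, and is more careful than the paper's ``up to sign convention''. But Step~1 does not prove the theorem from the definition of regularity as stated. The chaining cannot work in general, because the condition ``for each $t$ there is $t_0<t$ with $\int_{t_0}^tF<\infty$'' does not make $F$ locally integrable. For instance, $F(\tau)=(\tau-c)^{-1}$ for $\tau>c$ and $F(\tau)=0$ for $\tau\le c$ satisfies it at every time. Yet $\int_s^tF=\infty$ whenever $s<c<t$, so a chain started at $t>c$ stays above $c$ and never reaches the region where $V$ is small. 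The full hypotheses do exclude such an $F$, but only a posteriori, via the argument below. Your fallback therefore replaces the hypothesis by integrability of $F$ down to $t_{\min}$. The paper's own proof does the same thing silently: it picks $t_0$ with $V(t_0)\le1$ and then asserts $\int_{t_0}^tF<\infty$. Regularity only guarantees this for some $t_0$, possibly much closer to $t$ than any time where $V\le1$. Under that strengthened reading your argument coincides with the paper's.

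The idea that closes the gap for the literal statement is already in your proposal: transport compactness rather than finiteness of volume. Since $V(s)\to0$, fix one $s$ with $V(s)\le1$. Then $\Sigma_s$ is complete (by regularity) and homogeneous, hence compact by Lemma~\ref{lem:homogeneous-compact}. The normal-flow diffeomorphisms $\Sigma_t\to\Sigma_s$ you built from the Cauchy property then make every $\Sigma_t$ compact. Equivalently, use the standard fact that any two Cauchy hypersurfaces of a globally hyperbolic spacetime are homeomorphic. This needs neither the integrability condition nor any chaining. It also shows that the time-integrability hypotheses, including maximal regularity in Theorem~\ref{thm:main}, play no role in the compactness conclusion beyond completeness of the slices.
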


\begin{proof}
Fix $t \in (t_{\min},t_{\max}]$. Since $V(\tau)\to 0$ as $\tau\to t_{\min}^{+}$, choose $t_0\in(t_{\min},t)$ such that $V(t_0)\le 1$. By the first variation identity stated above,
\[
V(t)-V(t_0)=\int_{t_0}^{t}\left(\int_{\Sigma_\tau}N\theta_\tau\,d\mu_\tau\right)d\tau
\]
up to sign convention. Hence
\[
|V(t)-V(t_0)|
\le
\int_{t_0}^{t}\left(\int_{\Sigma_\tau}|N\theta_\tau|\,d\mu_\tau\right)d\tau
<\infty.
\]
Therefore $V(t)<\infty$. Since the slice $(\Sigma_t,h_t)$ is complete and homogeneous, Lemma~\ref{lem:homogeneous-compact} implies that $\Sigma_t$ is compact.
\end{proof}

\begin{remark}
This theorem is weaker than the previous one, because local uniform control of $\int_{\Sigma_t}|N\theta_t|\,d\mu_t$ implies the time-integrability hypothesis above, but not conversely. The preceding theorem may therefore be viewed as a strong Lipschitz-type version, while the present statement isolates the weaker condition that is still sufficient for spatial closedness.
\end{remark}

\begin{remark}
In the standard simply connected FLRW models with spatial curvature $K=0$ or $K=-1$, the Big Bang at $t=0$ does not exist as a point or event of the Lorentzian manifold. Indeed, the spacetime is defined only for $t>0$, so the value $t=0$ lies outside the manifold and appears only as a singular past boundary. Accordingly, even when the scale factor satisfies $a(t)\to 0$ as $t\to 0^+$, what collapses are the local proper distances determined by the metric, not the total spatial volume of the slices, which remains infinite for every $t>0$. Thus $t=0$ enters only as a singular past boundary of the time parameter, not as a point of the Lorentzian manifold itself. This also explains why the standard noncompact FLRW models are not counterexamples to the theorem: they fail the global volumetric Big Bang hypothesis used in the present article.
\end{remark}
\smallskip
Alongside the concise analytic files, the repository also contains a separate Lean development for the strong and weak closedness results in a dimension-independent spacetime setting aligned with the hypotheses of the paper.

\smallskip

\section{Lean formalization}

\subsection{For the strong version}

Selected parts of the proof of Theorem~\ref{thm:main} are shown here. The lightweight development isolates the analytic estimate yielding finite volume together with the final abstract implication from finite volume and homogeneity to spatial closedness, while the theorem-level files formalize the full logical structure of both the strong and weak closedness arguments in the abstract setting adopted in this article. In this sense, the Lean development verifies the argument at the same level of abstraction at which the main theorems are stated, rather than reproducing the full differential-geometric infrastructure of Lorentzian foliations line by line.

\smallskip

\begin{lstlisting}[style=leanstyle,caption={Analytic estimate}]
theorem volume_bound_from_lipschitz
    (hLip: forall x y, |V x - V y| <= L * |x - y|)
    (hB: |V t0| <= B):
    |V t| <= B + L * |t - t0|:= by
...
\end{lstlisting}

From this estimate, the development derives a finite-volume statement near the singular time $t_{\min}$ corresponding to the Big Bang hypothesis:

\begin{lstlisting}[style=leanstyle,caption={Big Bang plus Lipschitz gives finite volume}]
theorem finite_volume_of_big_bang_lipschitz
    (hNonneg: forall x, 0 <= V x)
    (hLip: forall x y, |V x - V y| <= L * |x - y|)
    (hNearBang: forall eps > 0, forall x > tmin,
      exists y, tmin < y /\ y <= x /\ V y <= eps):
    forall x, x > tmin -> exists B, V x <= B:= by
...
\end{lstlisting}

Here `hNearBang` expresses that arbitrarily small slice-volume values occur arbitrarily close to the singular time.

The final compactness step is encoded abstractly as an implication from finite volume together with a homogeneous compactness criterion:
\begin{lstlisting}[style=leanstyle,caption={Homogeneous compactness implication}]
theorem big_bang_implies_closed_universe
    {ClosedUniverse Homogeneous: Prop}
    (hFiniteVolume: forall x, exists B, V x <= B)
    (hCompactCriterion:
      Homogeneous ->
        (forall x, exists B, V x <= B) -> ClosedUniverse)
    (hSymmetry: Homogeneous):
    ClosedUniverse:=
  hCompactCriterion hSymmetry hFiniteVolume
\end{lstlisting}

\subsection{For the general version}

The weak version of the Big Bang theorem (\ref{thm:general}) can also be represented in Lean by isolating the estimate that yields finite volume once a finite accumulated expansion bound between $t_0$ and $t$ is available:
\begin{lstlisting}[style=leanstyle,caption={Finite volume from a finite accumulated expansion bound}]
theorem finite_volume_from_integrable_expansion
    (hNonneg: forall x, 0 <= V x)
    (hSmall: V t0 <= B)
    (hIntegral: |V t - V t0| <= I):
    V t <= B + I:= by
  have hBabs: |V t0| <= B:= by
    simpa [abs_of_nonneg (hNonneg t0)] using hSmall
  have htriangle: |V t| <= |V t - V t0| + |V t0|:= by
    have hdecomp: (V t - V t0) + V t0 = V t:= by linarith
    calc
      |V t| = |(V t - V t0) + V t0|:= by simpa [hdecomp]
      _ <= |V t - V t0| + |V t0|:= abs_add_le _ _
  have hmain: |V t| <= B + I:= by
    linarith
  simpa [abs_of_nonneg (hNonneg t)] using hmain
\end{lstlisting}

Combined with a hypothesis asserting the existence of a nearby time $t_0$ with small volume and a finite accumulated expansion bound between $t_0$ and $t$, this yields the same finite-volume conclusion used in the general theorem.

\section{Conclusion}

The classical theorem on homogeneous and isotropic space forms is purely spatial and Riemannian; by itself it does not determine a Lorentzian cosmological metric. Once the Lorentzian foliation is included in the hypotheses, however, one obtains precise compactness statements stated entirely at spacetime level. In the strong version, maximal regularity yields a Lipschitz estimate for the slice-volume function, so a finite-time Big Bang forces finite volume of every spatial slice. In the general version, the same finite-volume conclusion already follows from the weaker requirement that the accumulated expansion rate be time-integrable between the singular time and the slice under consideration. In both cases, time regularity plays a role in the temporal direction analogous to that of spatial homogeneity in the spatial direction: it provides a controlled, non-degenerate evolution of the foliation, while homogeneity together with completeness upgrades finite volume to compactness. This makes the closedness argument geometrically explicit and formally sharper, while clearly separating the strong and weak regularity regimes under which it holds.

From this perspective, the main contribution of the paper is to clarify the mathematical level at which the usual FLRW-type geometric justification must be formulated. Spatial homogeneity belongs to the Riemannian geometry of the slices, whereas cosmological closedness is a Lorentzian conclusion and therefore requires spacetime hypotheses involving foliation and temporal regularity.

The accompanying Lean development reflects this distinction as well: it contains a theorem-level formalization of both the strong and weak versions in an abstract framework matching the hypotheses adopted in the article. This formalization is dimension-independent and supports the abstract $(n+1)$-dimensional theorem, not merely its cosmological $(3+1)$-dimensional specialization.



\section*{Code availability}
The supplementary Lean files accompanying this work are available at \url{https://github.com/robertmonjo/BigBangTheorem} and archived at \url{https://doi.org/10.5281/zenodo.19368665}. They include the analytic files \texttt{BigBangTheorem\_Strong.lean} and \texttt{BigBangTheorem\_Weak.lean}, together with the theorem-level files \texttt{BigBangTheorem\_FullFramework.lean}, \texttt{BigBangTheorem\_FullStrong.lean}, and \texttt{BigBangTheorem\_FullWeak.lean}. The latter formalize the strong and weak closedness theorems in the abstract spacetime framework used in the article and are dimension-independent at theorem level.

\section*{Data availability}
No datasets were generated or analysed during the current study.

\section*{Funding}
This research received no specific grant from any funding agency in the public, commercial, or not-for-profit sectors.

\section*{Declaration of generative AI and AI-assisted technologies in the writing process}
During the preparation of this work, the author used AI-assisted tools to improve language. After using these tools, the author reviewed and edited the content as needed and takes full responsibility for the content of the article.

\section*{Declaration of competing interest}
The author declares that he has no known competing financial interests or personal relationships that could have appeared to influence the work reported in this paper.

\bibliographystyle{unsrtnat}
\bibliography{references}

\end{document}